\numberwithin{equation}{section}
\newtheorem{Theorem}[equation]{Theorem}
\newtheorem{nonum}{Theorem}
\newtheorem{Proposition}[equation]{Proposition}
\newtheorem{Lemma}[equation]{Lemma}
\newtheorem{Remark}[equation]{Remark}
\def\Xint#1{\mathchoice
{\XXint\displaystyle\textstyle{#1}}%
{\XXint\textstyle\scriptstyle{#1}}%
{\XXint\scriptstyle\scriptscriptstyle{#1}}%
{\XXint\scriptscriptstyle\scriptscriptstyle{#1}}%
\!\int}
\def\XXint#1#2#3{{\setbox0=\hbox{$#1{#2#3}{\int}$}
\vcenter{\hbox{$#2#3$}}\kern-.5\wd0}}
\def\dashint{\Xint-}
\def\bbR{\mathbb{R}}
\def\Rn{\mathbb{R}^n}
\def\d{\textup{d}}
\begin{document}

\title[Sparse domination theorem]{Sparse domination theorem for multilinear singular integral operators with $L^{r}$-H\"ormander condition}

\author{Kangwei Li}

\address{Department of Mathematics and Statistics, P.O.B. 68 (Gustaf
H\"all\-str\"omin katu 2b), FI-00014 University of Helsinki, Finland}
\curraddr{BCAM, Basque Center for Applied Mathematics, Mazarredo, 14. 48009 Bilbao Basque Country, Spain}
\email{kangwei.nku@gmail.com, kli@bcamath.org}

\thanks{The author was supported by the European Union through Tuomas Hyt\"onen's ERC
Starting Grant
``Analytic-probabilistic methods for borderline singular integrals''.
He was a member of the Finnish Centre of Excellence in Analysis and
Dynamics Research. He is also supported by the Basque Government through the BERC 2014-2017 program and
by Spanish Ministry of Economy and Competitiveness MINECO: BCAM Severo Ochoa excellence accreditation SEV-2013-0323.
}

\date{\today}

\keywords{sparse domination theorem, $L^{r}$-H\"ormander condition}
\subjclass[2010]{42B20, 42B25, 42B15}

\begin{abstract}
In this note, we show that if $T$ is a multilinear singular integral operator associated with a kernel satisfies the so-called multilinear $L^{r}$-H\"ormander condition, then $T$ can be dominated by  multilinear sparse operators.
\end{abstract}

\maketitle

\section{Introduction and main results}\label{Sect:1}
This note is devoted to obtain a sparse domination formula for a class of multilinear singular integral operators. 
 Dominating Calder\'on-Zygmund operators by sparse operators starts from Lerner \cite{Ler-imrn}, in which paper he obtained the following result,
 \begin{equation}
\|Tf(x)\|_X\le C_T \sup_{\mathcal D, \mathcal S}\Big\| \sum_{Q\in \mathcal S} \Big( \dashint_Q |f|\Big) \chi_Q(x)\Big\|_X,
\end{equation}where the supremum is taken over all the sparse families $\mathcal S\subset \mathcal D$ (see below for the definition) and all the dyadic grids $\mathcal D$ and $X$ is an arbitrary Banach function space. Then the $A_2$ theorem (due to Hyt\"onen \cite{Hyt}) follows as an easy consequence.

Later, this result was refined by a pointwise control independently and simultaneously by Conde-Alonso and Rey \cite{CR}, and by Lerner and Nazarov \cite{LN}. All the results mentioned above require the kernel satisfying the $\log$-Dini condition. Finally, Lacey \cite{Lac15} relaxed the $\log$-Dini condition to Dini condition and then Hyt\"onen, Roncal and Tapiola \cite{HRT}
refined the proof by tracking the precise dependence on the constants. Very recently, Lerner \cite{Ler2016} also provided a new proof for this result, which also works for some more general operators.
To be precise, Lerner showed the following result:
 \begin{nonum}\cite[Theorem 4.2]{Ler2016}\label{thm:lerner}
Given a sublinear operator $T$. Assume that $T$ is of weak type $(q, q)$ and the corresponding grand maximal truncated operator $\mathcal M_T$ is of weak type $(r,r)$, where $1\le q\le r<\infty$. Then, for every compactly supported $f\in L^r(\bbR^n)$, there exists a sparse family $\mathcal S$ such that for a.e. $x\in \bbR^n$,
\[
|T(f)(x)|\le c_{n,q,r}(\|T\|_{L^q\rightarrow L^{q,\infty}}+ \|\mathcal M_T\|_{L^r\rightarrow L^{r,\infty}})\sum_{Q\in \mathcal S}\Big( \dashint_Q |f|^r\Big)^{\frac 1r}\chi_Q(x).
\]
\end{nonum}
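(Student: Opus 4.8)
\emph{Proof strategy.} The plan is to run Lerner's stopping-time argument. Recall that the grand maximal truncated operator is
\[
\mathcal M_T f(x)=\sup_{Q\ni x}\ \operatorname*{ess\,sup}_{\xi\in Q}\ |T(f\chi_{\Rn\setminus 3Q})(\xi)| ,
\]
the supremum running over all cubes $Q$ containing $x$. As usual, it is enough to prove a localised version: for every cube $Q_0$ there is a $\tfrac12$-sparse family $\mathcal S\subset\mathcal D(Q_0)$ of dyadic subcubes of $Q_0$ such that, with $C=c_{n,q,r}\big(\|T\|_{L^q\to L^{q,\infty}}+\|\mathcal M_T\|_{L^r\to L^{r,\infty}}\big)$,
\[
|T(f\chi_{3Q_0})(x)|\le C\sum_{Q\in\mathcal S}\Big(\dashint_{3Q}|f|^{r}\Big)^{1/r}\chi_Q(x)\qquad\text{for a.e. }x\in Q_0 .
\]
Deducing the global estimate from this, and then replacing $\{3Q:Q\in\mathcal S\}$ by genuinely sparse subfamilies so as to pass from $\dashint_{3Q}$ to $\dashint_{Q}$, are the standard reductions via the $3^n$ shifted dyadic lattices, which I treat as known.

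Fix $Q_0$, set $g=f\chi_{3Q_0}$ and $\lambda=\big(\dashint_{3Q_0}|f|^{r}\big)^{1/r}$, and consider the exceptional set
\[
E=\big\{x\in Q_0:|Tg(x)|>C\lambda\big\}\cup\big\{x\in Q_0:\mathcal M_T g(x)>C\lambda\big\} .
\]
The heart of the matter is that $|E|$ is a small fraction of $|Q_0|$. By the weak type $(q,q)$ bound and the elementary inequality $\int_{3Q_0}|f|^{q}\le|3Q_0|\,\lambda^{q}$ — which is exactly where $q\le r$ enters, via H\"older's inequality — the first set has measure at most $3^{n}\big(\|T\|_{L^q\to L^{q,\infty}}/C\big)^{q}|Q_0|$; by the weak type $(r,r)$ bound for $\mathcal M_T$ and $\int_{3Q_0}|f|^{r}=|3Q_0|\,\lambda^{r}$, the second set has measure at most $3^{n}\big(\|\mathcal M_T\|_{L^r\to L^{r,\infty}}/C\big)^{r}|Q_0|$. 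Hence, taking $c_{n,q,r}$ large makes $|E|<2^{-n-3}|Q_0|$. Now perform a Calder\'on--Zygmund decomposition of $\chi_E$ over $Q_0$ at a height $\alpha\in\big(2|E|/|Q_0|,\,2^{-n}\big)$, which is a nonempty interval by the bound on $|E|$: one obtains pairwise disjoint dyadic cubes $P_j\subset Q_0$ with $E\subset\bigcup_j P_j$ up to a null set, $\sum_j|P_j|\le\tfrac12|Q_0|$, and $|P_j\setminus E|>0$ for every $j$. Note also that $P_j\subset Q_0$ forces $3P_j\subset 3Q_0$, so that $g\chi_{3P_j}=f\chi_{3P_j}$.

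On each $P_j$ write $Tg=T(f\chi_{3P_j})+T(g\chi_{\Rn\setminus 3P_j})$. On $Q_0\setminus\bigcup_j P_j$ one has $|Tg|\le C\lambda$ a.e., since that set avoids $E$. On $P_j$, choose a point $z\in P_j\setminus E$; then $\mathcal M_T g(z)\le C\lambda$, and since $P_j$ is an admissible cube in the supremum defining $\mathcal M_T g(z)$, this forces $|T(g\chi_{\Rn\setminus 3P_j})(x)|\le C\lambda$ for a.e. $x\in P_j$. By sublinearity of $T$ we arrive at the self-similar inequality
\[
|Tg(x)|\,\chi_{Q_0}(x)\le C\lambda\,\chi_{Q_0}(x)+\sum_j|T(f\chi_{3P_j})(x)|\,\chi_{P_j}(x)\qquad\text{a.e.}
\]
Each summand $|T(f\chi_{3P_j})|\chi_{P_j}$ has exactly the form of the left-hand side with $3P_j$ in place of $3Q_0$, so the construction iterates. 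Collecting all cubes so produced into a family $\mathcal S$ gives a $\tfrac12$-sparse family, because the relation $\sum|P_j|\le\tfrac12|P|$ at each step makes the sets $E_P:=P\setminus\bigcup(\text{children of }P)$ pairwise disjoint with $|E_P|\ge\tfrac12|P|$. After $m$ iterations the remaining error term is supported on the union of the $m$-th generation cubes, a set of measure $\le 2^{-m}|Q_0|$; since $\sum_m 2^{-m}<\infty$, the Borel--Cantelli lemma shows that this error vanishes at almost every $x$ once $m$ is large, and letting $m\to\infty$ completes the localised estimate.

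I expect the genuine obstacle to be the stopping step itself: one must use a single threshold $\lambda=\big(\dashint_{3Q_0}|f|^{r}\big)^{1/r}$ that controls \emph{both} parts of $E$ at once — the $L^q$ part through weak type $(q,q)$ and H\"older's inequality (this is precisely where $q\le r$ is used) and the $L^r$ part through weak type $(r,r)$ for $\mathcal M_T$ — and then exploit $\mathcal M_T$ to dominate the non-local piece $T(g\chi_{\Rn\setminus 3P_j})$ on all of $P_j$ by its value at one good point $z\in P_j\setminus E$; that last point is what lets the recursion close. The localisation reduction and the almost-everywhere convergence of the iteration are, by comparison, routine.
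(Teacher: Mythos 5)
Your proposal is correct, but it takes a slightly different route from the paper's (and Lerner's original) at the key step of controlling $|Tg|$ on $Q_0\setminus\bigcup_j P_j$. The paper proves a pointwise lemma (Lemma~\ref{lm:grand}, via the approximate-continuity/Lebesgue-point argument of \cite{EG}) giving, for a.e.\ $x\in Q_0$,
\[
|T(f\chi_{3Q_0})(x)|\le c_n\|T\|_{L^q\to L^{q,\infty}}|f(x)|+\mathcal M_{T,Q_0}f(x),
\]
and then defines the exceptional set using the level sets of $|f|$ and $\mathcal M_{T,Q_0}f$; the estimate on $Q_0\setminus\bigcup_j P_j$ then comes from that lemma. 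You instead put the level set $\{x\in Q_0:|T(f\chi_{3Q_0})(x)|>C\lambda\}$ directly into the exceptional set, which makes the estimate on the complement of $\bigcup_j P_j$ immediate and entirely bypasses the approximate-continuity lemma. Both are valid: you use exactly the same two weak-type bounds and H\"older (for $q\le r$) to make $|E|$ small, the same Calder\'on--Zygmund stopping at a sub-$2^{-n}$ height, the same use of a good point $z\in P_j\setminus E$ and of $\mathcal M_T g(z)$ via the cube $P_j$ to dominate the nonlocal piece $T(g\chi_{\Rn\setminus 3P_j})$ on $P_j$, and the same iteration/Borel--Cantelli closing argument, so the structure is identical. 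What the paper's route buys is the pointwise a.e.\ bound of Lemma~\ref{lm:grand}, which is useful on its own and is what the paper actually ports to the multilinear setting (where $T$ need not be multi-sublinear and one cannot split $T(g,g)$ as you split $Tg$); what your route buys is a shorter proof in the linear/sublinear case, free of the Lebesgue-differentiation machinery. One small stylistic slip: you write $Tg=T(f\chi_{3P_j})+T(g\chi_{\Rn\setminus 3P_j})$ on $P_j$, which is an equality only for linear $T$; since the theorem allows sublinear $T$, this should be the inequality $|Tg|\le |T(f\chi_{3P_j})|+|T(g\chi_{\Rn\setminus 3P_j})|$, which is what you in fact invoke a line later.
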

Let us recall the notations in the above result.
We say $\mathcal S$ is a sparse family if for all cubes $Q\in \mathcal S$, there exists $E_Q\subset Q$ which are pairwise disjoint and $|E_Q|\ge \gamma |Q|$, where $0<\gamma<1$. For a given operator $T$, the so-called ``grand maximal truncated'' operator $\mathcal M_T$ is defined by
\[
\mathcal M_T f(x)=\sup_{Q\ni x}\mathop{\text{ess\,sup}}_{\xi \in Q}|T(f\chi_{\bbR^n\setminus 3Q})(\xi)|.
\]

It is shown in \cite{Ler2016} that Calder\'on-Zygmund operators with Dini continuous kernel satisfy the assumption in Theorem~\ref{thm:lerner} with $q=r=1$. In this short note, we \nobreak{shall} give a multilinear analogue of Theorem \ref{thm:lerner}. Then as an application, we give a sparse domination formula for multilinear singular integral operators whose kernel $K(x,y_1,\cdots,y_m)$ satisfies the so-called $m$-linear $L^{r}$-H\"ormander condition
\begin{align}\label{eq:hormander}
K_r:=\sup_Q \sup_{x,z\in \frac 12 Q}\sum_{k=1}^{\infty}&|2^k Q|^{\frac m r}\Big(\int_{(2^{k}Q )^m\setminus (2^{k-1} Q)^m}\\
&|K(x,y_1,\cdots,y_m)-K(z,y_1,\cdots,y_m)|^{r'}\d {\vec y}\Big)^{\frac 1{r'}}<\infty,\nonumber
\end{align}
where $Q^m=\underbrace{Q\times \cdots \times Q}_{m\,\text{times}}$ and $1\le r<\infty$. When $r=1$, the above formula is understood as 
\begin{align*}
K_1:=\sup_Q &\sup_{x,z\in \frac 12 Q}\sum_{k=1}^{\infty}|2^k Q|^{  m  }\\
&\mathop{\text{ess\,sup}}_{\vec y\in (2^{k}Q )^m\setminus (2^{k-1} Q)^m} |K(x,y_1,\cdots,y_m)-K(z,y_1,\cdots,y_m)|<\infty.
\end{align*}
In \cite{BFP}, Bernicot, Frey and Petermichl showed that  a large class of singular non-integral operators can be dominated by sparse operators (actually in the bilinear form sense). Even in the linear case, the $L^r$-H\"ormander condition is beyond the ``off-diagonal estimate'' assumption in \cite{BFP}. We also remark that our assumption is weaker than the assumption  (H2)  used in \cite{BCDH} (see Proposition \ref{prop:h2}). It is also easy to see that our assumption is weaker than the Dini condition used in \cite{DHL} (see Proposition \ref{prop:dini}).

Now to state our main result, we need  a multilinear analogue of grand maximal truncated operator. Given an operator $T$, define
\[
\mathcal M_T (f_1,\cdots, f_m)(x)=\sup_{Q\ni x}\mathop{\text{ess\,sup}}_{\xi \in Q}|T(f_1,\cdots, f_m)(\xi)-T(f_1\chi_{3Q},\cdots, f_m\chi_{3Q})(\xi) |,
\]note that we don't require $T$ to be multi-sublinear.
Given a cube $Q_0$, for $x\in Q_0$, we also define a local version of $\mathcal M_T$ by
\begin{align*}
\mathcal M_{T,Q_0} (f_1,\cdots, f_m)(x)=\sup_{Q\ni x, Q\subset Q_0}\mathop{\text{ess\,sup}}_{\xi \in Q}|T&(f_1\chi_{3Q_0},\cdots, f_m\chi_{3Q_0})(\xi)\\
&-T(f_1\chi_{3Q},\cdots, f_m\chi_{3Q})(\xi) |.
\end{align*}
Our first result reads as follows
\begin{Theorem}\label{thm:m1}
Assume that $T$ is bounded from $L^{q}\times \cdots \times L^q$ to $L^{q/m,\infty}$ and $\mathcal M_T$ is bounded from $L^r\times \cdots \times L^r$ to $L^{r/m, \infty}$, where $1\le q\le r<\infty$. Then, for compactly supported functions $f_i\in L^r(\bbR^n)$, $i=1,\cdots m$, there exists a sparse family $\mathcal S$ such that for a.e. $x\in \bbR^n$,
\begin{align*}
|T(f_1,\cdots, f_m)(x)|\le c_{n,q,r}&(\|T\|_{L^q\times\cdots\times L^q\rightarrow L^{q/m,\infty}}+ \|\mathcal M_T\|_{L^r\times \cdots\times L^r\rightarrow L^{r/m,\infty}})\\
&\times\sum_{Q\in \mathcal S}\prod_{i=1}^m\Big( \dashint_Q |f_i|^r\Big)^{\frac 1r}\chi_Q(x).
\end{align*}
\end{Theorem}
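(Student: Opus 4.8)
The plan is to follow the by-now-standard Lerner-type recursion, adapted to the multilinear setting. First I would fix a cube $Q_0$ and prove a \emph{local} estimate: there exists a sparse family $\mathcal S \subset \mathcal D(Q_0)$ (dyadic cubes inside $Q_0$) such that for a.e. $x \in Q_0$,
\[
|T(f_1\chi_{3Q_0},\cdots,f_m\chi_{3Q_0})(x)| \le c \Big(\prod_{i=1}^m \dashint_{3Q_0}|f_i|^r\Big)^{1/r}|3Q_0| \,/\,\text{(normalization)} + \text{(sparse sum over proper subcubes)}.
\]
More precisely, I would show that for a.e. $x\in Q_0$,
\[
|T(f_1\chi_{3Q_0},\cdots,f_m\chi_{3Q_0})(x)|\chi_{Q_0}(x) \le c\,C_T \prod_{i=1}^m\Big(\dashint_{3Q_0}|f_i|^r\Big)^{1/r}\chi_{Q_0}(x) + \sum_{j} |T(f_1\chi_{3P_j},\cdots,f_m\chi_{3P_j})(x)|\chi_{P_j}(x),
\]
where $\{P_j\}$ are pairwise disjoint subcubes of $Q_0$ with $\sum_j|P_j|\le \tfrac12|Q_0|$, and then iterate. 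Once the local estimate holds, the global statement is obtained by covering $\operatorname{supp} f_i$ (and hence the relevant portion of $\bbR^n$ where $T$ is nonzero, up to controlling the far-away piece) by a bounded-overlap family of cubes $Q_0$ and taking unions of the resulting sparse families; standard arguments handle the passage from finitely-overlapping to genuinely disjoint $E_Q$'s and the $\tfrac12$ versus general $\gamma$ issue.

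The heart is the construction of the stopping cubes $\{P_j\}$ in the local estimate. I would define, for fixed $Q_0$, the set
\[
E = \Big\{ x\in Q_0 : \mathcal M_{T,Q_0}(f_1,\cdots,f_m)(x) > C\,\|\mathcal M_T\|\prod_{i=1}^m\Big(\dashint_{3Q_0}|f_i|^r\Big)^{1/r} \Big\} \cup \Big\{x\in Q_0 : \sum_{i}M_r(f_i\chi_{3Q_0})(x)^{?} > \cdots\Big\},
\]
where $M_r g = (M(|g|^r))^{1/r}$; more cleanly, set
\[
E = \Big\{x\in Q_0: \mathcal M_{T,Q_0}(f_1,\cdots,f_m)(x) > C\,\lambda\Big\} \cup \Big\{x\in Q_0: \prod_{i=1}^m M_r(f_i\chi_{3Q_0})(x) > C\,\lambda\Big\},\qquad \lambda := \prod_{i=1}^m\Big(\dashint_{3Q_0}|f_i|^r\Big)^{1/r},
\]
and choose $C$ large enough, using the weak $(r/m,\cdots)$ bound on $\mathcal M_T$ together with the product weak-type bound for $\prod M_r$ (each $M_r$ is weak $(r,r)$, so the product is weak $(r/m,r/m)$), to force $|E| \le \tfrac12|Q_0|$. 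Then let $\{P_j\}$ be the maximal dyadic subcubes of $Q_0$ with $P_j \cap E \ne \emptyset$ in the appropriate stopping sense — actually the maximal cubes of the Calderón–Zygmund decomposition of $E$ relative to $Q_0$, so that $E \subset \bigcup_j P_j$ up to null sets, the $P_j$ are pairwise disjoint, $\sum|P_j|\le\tfrac12|Q_0|$, and $3P_j$ is still "comparable" to a cube on which the averages of the $f_i$ are controlled by $\lambda$ (this last point uses that the \emph{parent} of $P_j$ is not inside $\bigcup$, giving control of $\dashint_{3P_j}|f_i|^r$ by a dimensional multiple of $\dashint_{3Q_0}|f_i|^r$ — this is where one must be a little careful and possibly pass to $\tfrac13 Q_0$, or work with the modified averages as in Lerner's paper).

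With $\{P_j\}$ in hand, the pointwise decomposition is: for a.e. $x\in Q_0$,
\[
|T(\vec f\chi_{3Q_0})(x)|\chi_{Q_0\setminus\bigcup P_j}(x) \le |T(\vec f\chi_{3Q_0})(x) - T(\vec f\chi_{3P_j})(x)|\chi_{P_j} + \ldots \le C\lambda\,\chi_{Q_0}(x),
\]
directly from the definition of $\mathcal M_{T,Q_0}$ and the fact that $x\notin E$, while on each $P_j$ one adds and subtracts $T(f_1\chi_{3P_j},\cdots,f_m\chi_{3P_j})$ and recurses. (One subtlety: $T$ is not assumed multi-sublinear, so I cannot split $T$ of a sum; the algebraic identity must be organized purely through the differences that $\mathcal M_{T,Q_0}$ controls, exactly as the definition of $\mathcal M_{T,Q_0}$ was set up to allow — this is why the theorem is stated with that particular local operator.) Iterating this construction — applying it inside each $P_j$, then inside the stopping cubes those generate, etc. — produces the sparse family $\mathcal S = \{Q_0\}\cup\{P_j\}\cup\cdots$; the geometric decay $\sum|P_j|\le\tfrac12|Q_0|$ guarantees the iteration exhausts $Q_0$ up to a null set, and summing the bound $C\lambda_Q = C\,C_T\prod_i(\dashint_{3Q}|f_i|^r)^{1/r}$ over $Q\in\mathcal S$ gives the claimed estimate (after replacing $3Q$-averages by $Q$-averages via enlarging the sparse family by dilations, a routine step). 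The main obstacle I anticipate is precisely the bookkeeping around the factor $3$ and the non-sublinearity of $T$: making sure every time one "localizes" $f_i$ to a smaller cube, the error is genuinely a difference of the form controlled by $\mathcal M_{T,Q_0}$ (or $\mathcal M_T$), and that the averages over the enlarged stopping cubes $3P_j$ remain comparable to $\lambda$; for the very first cube $Q_0$ one also needs an initial reduction showing $|T(\vec f)(x)|$ on $Q_0$ is controlled by $|T(\vec f\chi_{3Q_0})(x)|$ plus $\mathcal M_T(\vec f)(x)$, which again feeds into the stopping set.
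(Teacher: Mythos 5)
Your proposal follows the same Lerner-type recursion as the paper (CZ stopping cubes from an exceptional set, iterate the local estimate, take unions of sparse families), but there is a genuine gap at the heart of the local estimate: you never explain how to bound the \emph{absolute value} $|T(\vec f\chi_{3Q_0})(x)|$ for $x\in Q_0\setminus\bigcup_j P_j$, and your sketch never uses the hypothesis that $T$ itself is weak $(q,\ldots,q;q/m)$ bounded. The definition of $\mathcal M_{T,Q_0}$ only controls \emph{differences} $T(\vec f\chi_{3Q_0})(\xi)-T(\vec f\chi_{3Q})(\xi)$; shrinking $Q$ to $x$ leaves a residual term $T(\vec f\chi_{3Q})(\xi)$ that is not controlled ``directly from the definition.'' The paper's Lemma \ref{lm:grand} is precisely what bridges this gap: for a.e.\ $x\in Q_0$,
\begin{equation}
|T(\vec f\chi_{3Q_0})(x)|\le c_n\|T\|_{L^q\times\cdots\times L^q\rightarrow L^{q/m,\infty}}\prod_{i=1}^m|f_i(x)|+\mathcal M_{T,Q_0}(\vec f)(x),
\end{equation}
proved via approximate continuity of $T(\vec f\chi_{3Q_0})$, Lebesgue points of $|f_i|^q$, and the weak-type bound of $T$ applied on shrinking cubes. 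Correspondingly, the paper's exceptional set includes the level set $\{x\in Q_0:\prod_i|f_i(x)|>c_n\lambda\}$ (controlled by Chebyshev) rather than a maximal-function level set as in your version; either variant works, but only once a Lemma-\ref{lm:grand}-type pointwise inequality is in place, and you never state one.

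Two smaller points. First, your displayed inequality is garbled: its left side is $|T(\vec f\chi_{3Q_0})(x)|\chi_{Q_0\setminus\bigcup P_j}(x)$ but the right side starts with a term supported on $P_j$. Second, your worry that ``$\dashint_{3P_j}|f_i|^r$ remains comparable to $\lambda$'' (via the parent of $P_j$) is not actually needed: the recursive inequality \eqref{recursive} does not involve averages over $3P_j$ at this stage. What the stopping-cube construction must deliver is that $P_j\cap E^c\neq\emptyset$, so that some $z\in P_j$ has $\mathcal M_{T,Q_0}(\vec f)(z)\le c_n\|\mathcal M_T\|\lambda$; since $\mathcal M_{T,Q_0}$ is defined as a supremum of ess sup's over cubes containing $z$, this single good point controls $\mathop{\textup{ess\,sup}}_{\xi\in P_j}|T(\vec f\chi_{3Q_0})(\xi)-T(\vec f\chi_{3P_j})(\xi)|$ for the whole cube $P_j$. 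The paper enforces $P_j\cap E^c\neq\emptyset$ by running the CZ decomposition of $\chi_E$ at height $1/2^{n+1}$, which gives $|P_j\cap E|<\tfrac12|P_j|$ together with $\sum_j|P_j|\le\tfrac12|Q_0|$.
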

As a consequence, we show the following
\begin{Theorem}\label{thm:m2}
Let $T$ be a multilinear singular integral operator which is bounded from  $L^{r}\times \cdots \times L^r$ to $L^{r/m,\infty}$ and its kernel satisfies the $m$-linear $L^{r}$-H\"ormander condition.
Then, for compactly supported functions $f_i\in L^r(\bbR^n)$, $i=1,\cdots m$, there exists a sparse family $\mathcal S$ such that for a.e. $x\in \bbR^n$,
\begin{align*}
|T(f_1,\cdots, f_m)(x)|\le c_{n,r}&(\|T\|_{L^r\times\cdots\times L^r\rightarrow L^{r/m,\infty}}+ K_{r})\\
&\times\sum_{Q\in \mathcal S}\prod_{i=1}^m\Big( \dashint_Q |f_i|^r\Big)^{\frac 1r}\chi_Q(x).
\end{align*}
\end{Theorem}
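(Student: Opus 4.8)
The plan is to derive Theorem~\ref{thm:m2} from Theorem~\ref{thm:m1} applied with $q=r$. For brevity write $\vec f=(f_1,\dots,f_m)$ and $\vec f\chi_E=(f_1\chi_E,\dots,f_m\chi_E)$. The first hypothesis of Theorem~\ref{thm:m1} is precisely the assumed boundedness $L^r\times\cdots\times L^r\to L^{r/m,\infty}$ of $T$, so the entire task is to show that $\mathcal M_T$ is bounded from $L^r\times\cdots\times L^r$ to $L^{r/m,\infty}$ with norm $\le c_{n,r}\big(\|T\|_{L^r\times\cdots\times L^r\to L^{r/m,\infty}}+K_r\big)$. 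I would obtain this from the a.e.\ pointwise estimate
\[
\mathcal M_T(\vec f)(x)\le c_{n,r}\big(\|T\|_{L^r\times\cdots\times L^r\to L^{r/m,\infty}}+K_r\big)\prod_{i=1}^m M_rf_i(x)+c_n\,M_\delta\big(T(\vec f)\big)(x),
\]
where $M$ is the Hardy--Littlewood maximal operator, $M_sg:=\big(M(|g|^s)\big)^{1/s}$, and $\delta:=r/(2m)\in(0,r/m)$.

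To prove this, fix a cube $Q\ni x$. Writing $f_i=f_i\chi_{3Q}+f_i\chi_{(3Q)^c}$, expanding $T(\vec f)$ by multilinearity, and invoking the off-support kernel representation of $T$ (legitimate because $Q\subset 3Q$ while every non-local factor is supported in $(3Q)^c$), one gets, for a.e.\ $\xi\in Q$ and a.e.\ $\bar\xi\in Q$,
\begin{align*}
T(\vec f)(\xi)-T(\vec f\chi_{3Q})(\xi)
&=\int_{(\bbR^n)^m\setminus(3Q)^m}\big(K(\xi,\vec y)-K(\bar\xi,\vec y)\big)\prod_{i=1}^m f_i(y_i)\,\d\vec y\\
&\quad+T(\vec f)(\bar\xi)-T(\vec f\chi_{3Q})(\bar\xi).
\end{align*}
For the first term I would use $(\bbR^n)^m\setminus(3Q)^m\subset(\bbR^n)^m\setminus(2Q)^m$, decompose the latter into the dyadic shells $(2^k\cdot2Q)^m\setminus(2^{k-1}\cdot2Q)^m$ ($k\ge1$), apply H\"older with exponents $r'$ and $r$ on each shell, bound $\prod_i\big(\dashint_{2^k\cdot2Q}|f_i|^r\big)^{1/r}\le\prod_i M_rf_i(x)$, and sum over $k$; since $\xi,\bar\xi\in Q=\tfrac12(2Q)$, the $m$-linear $L^r$-H\"ormander condition~\eqref{eq:hormander} for the cube $2Q$ bounds this term by $K_r\prod_i M_rf_i(x)$ (for $r=1$ one uses the essential-supremum form of the condition in place of H\"older). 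For the second term, which equals $T(\vec f)(\bar\xi)-T(\vec f\chi_{3Q})(\bar\xi)$, I would choose $\bar\xi$ in a subset of $Q$ of measure $>\tfrac12|Q|$ on which, simultaneously, $|T(\vec f\chi_{3Q})(\bar\xi)|\le c_{n,r}\|T\|_{L^r\times\cdots\times L^r\to L^{r/m,\infty}}\prod_i M_rf_i(x)$ (from the weak bound of $T$, Chebyshev, and $\|f_i\chi_{3Q}\|_r=|3Q|^{1/r}\big(\dashint_{3Q}|f_i|^r\big)^{1/r}$) and $|T(\vec f)(\bar\xi)|\le 2^{1/\delta}M_\delta\big(T(\vec f)\big)(x)$ (from Chebyshev applied to $\dashint_Q|T(\vec f)|^\delta\le\big(M_\delta(T(\vec f))(x)\big)^\delta$, which is finite a.e.\ since $|T(\vec f)|^\delta\in L^{(r/m)/\delta,\infty}\subset L^1_{\mathrm{loc}}$ because $(r/m)/\delta>1$); such a $\bar\xi$ exists because each exceptional set has measure $<\tfrac14|Q|$ once the implicit constants are taken large enough. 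Taking the essential supremum over $\xi\in Q$ and then the supremum over $Q\ni x$---the right-hand side depending on neither $Q$ nor $\xi$ nor $\bar\xi$---yields the pointwise estimate.

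With the pointwise estimate at hand I would take $L^{r/m,\infty}$ quasi-norms: $\big\|\prod_i M_rf_i\big\|_{L^{r/m,\infty}}\le c_{m,r}\prod_i\|M_rf_i\|_{L^{r,\infty}}\le c_{n,m,r}\prod_i\|f_i\|_r$ by H\"older in weak-type spaces and the weak $(r,r)$ bound for $M_r$; and $\big\|M_\delta\big(T(\vec f)\big)\big\|_{L^{r/m,\infty}}=\big\|M\big(|T(\vec f)|^\delta\big)\big\|_{L^{(r/m)/\delta,\infty}}^{1/\delta}\le c_{n,r}\|T(\vec f)\|_{L^{r/m,\infty}}\le c_{n,r}\|T\|_{L^r\times\cdots\times L^r\to L^{r/m,\infty}}\prod_i\|f_i\|_r$, using boundedness of $M$ on $L^{(r/m)/\delta,\infty}$ (here $(r/m)/\delta=2>1$). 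This gives the required bound on $\|\mathcal M_T\|_{L^r\times\cdots\times L^r\to L^{r/m,\infty}}$, so both hypotheses of Theorem~\ref{thm:m1} hold with $q=r$ and its conclusion is exactly Theorem~\ref{thm:m2}. The one genuinely delicate point---and the main obstacle---is the ``$\bar\xi$'' term: $T(\vec f)$ cannot be controlled pointwise by $\prod_i M_rf_i$, so it is essential to insert the auxiliary maximal operator $M_\delta$ with $\delta<r/m$ and, at the very end, to use boundedness of $M$ on the non-endpoint space $L^{(r/m)/\delta,\infty}$; everything else is routine.
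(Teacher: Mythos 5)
Your proposal is correct and follows essentially the same path as the paper: reduce to a weak $(r,\dots,r)\to L^{r/m,\infty}$ bound for $\mathcal M_T$ via Theorem~\ref{thm:m1} with $q=r$, insert an auxiliary point $\bar\xi$ to create a kernel difference controlled by the $L^r$-H\"ormander condition, and absorb the two leftover $T$-terms at $\bar\xi$ into $M_\delta(T(\vec f))(x)$ and a weak-type average of $T(\vec f\chi_{3Q})$. The only (cosmetic) difference is that where you choose a single good $\bar\xi$ by Chebyshev on a set of measure $>\tfrac12|Q|$, the paper instead takes the $L^{r/(2m)}$ average of the inequality over $\bar\xi\in Q$ and invokes Kolmogorov's inequality for the $T(\vec f\chi_{3Q})$ term; the two devices are interchangeable and produce the same pointwise bound.
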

We would like to remark that recently a different approach to sparse domination of singular integrals not relying on weak endpoint bounds for grand maximal functions has been developed by many authors, see \cite{CACDPO, CDPO, KL, LacS}.

In the next section, we shall give a proof for Theorems \ref{thm:m1} and \ref{thm:m2}. And in Section \ref{sect:3}, we will give some remarks about the $m$-linear $L^r$-H\"ormander condition.

\section{Proof of Theorems \ref{thm:m1} and \ref{thm:m2}}
The proof of Theorem \ref{thm:m1} will follow the same idea used in \cite{Ler2016}, but with proper changes to make it suit for the multilinear case. For simplicity, we only prove both results in the case of $m=2$, and the general case can be proved similarly. First, we prove the following lemma.
\begin{Lemma}\label{lm:grand}
Suppose $T$ is  bounded from $L^q\times L^q\rightarrow L^{q/2,\infty}$, $1\le q<\infty$.
Then for a.e. $x\in Q_0$,
\[
|T(f_1\chi_{3Q_0}, f_2\chi_{3Q_0})(x)|\le c_n\|T\|_{L^q\times L^q\rightarrow L^{\frac q2,\infty}}|f_1(x)f_2(x)|+ \mathcal M_{T, Q_0}(f_1, f_2)(x).
\]
\end{Lemma}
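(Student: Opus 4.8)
My plan is to prove this pointwise by using the Lebesgue differentiation theorem to extract pointwise information from the weak-type bound. Fix a point $x \in Q_0$ which is simultaneously a Lebesgue point of $f_1$, $f_2$, and $f_1 f_2$; almost every point has this property. The idea is to compare $T(f_1\chi_{3Q_0}, f_2\chi_{3Q_0})(x)$ with the local grand maximal truncated operator by choosing a shrinking sequence of cubes $Q \ni x$ with $Q \subset Q_0$. For such $Q$, we have by definition of $\mathcal M_{T,Q_0}$ that
\[
|T(f_1\chi_{3Q_0}, f_2\chi_{3Q_0})(\xi) - T(f_1\chi_{3Q}, f_2\chi_{3Q})(\xi)| \le \mathcal M_{T,Q_0}(f_1,f_2)(x)
\]
for a.e.\ $\xi \in Q$, so it suffices to control the $L^0$-behaviour (e.g.\ an essential infimum over small $Q$) of the ``local'' term $T(f_1\chi_{3Q}, f_2\chi_{3Q})(\xi)$ and show it converges to $c_n\|T\|\,|f_1(x)f_2(x)|$-type quantity, or rather is bounded by it.

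The key step is to estimate $\inf_{\xi\in Q} |T(f_1\chi_{3Q}, f_2\chi_{3Q})(\xi)|$ using the weak type $(q,q)\times(q,q)\to (q/2,\infty)$ bound. By Chebyshev,
\[
|\{\xi \in Q : |T(f_1\chi_{3Q}, f_2\chi_{3Q})(\xi)| > \lambda\}| \le \frac{\|T\|^{q/2}}{\lambda^{q/2}} \Big(\int_{3Q}|f_1|^q\Big)^{1/2}\Big(\int_{3Q}|f_2|^q\Big)^{1/2};
\]
choosing $\lambda$ a large constant multiple of $c_n\|T\|\big(\dashint_{3Q}|f_1|^q\big)^{1/q}\big(\dashint_{3Q}|f_2|^q\big)^{1/q}$ makes the right side strictly less than $|Q|$, so the set where $|T(\cdots)|$ exceeds this threshold cannot be all of $Q$. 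Hence for every such $Q$ there exists $\xi_Q \in Q$ (in fact a positive-measure set of them) with
\[
|T(f_1\chi_{3Q}, f_2\chi_{3Q})(\xi_Q)| \le c_n\|T\|_{L^q\times L^q\to L^{q/2,\infty}}\Big(\dashint_{3Q}|f_1|^q\Big)^{1/q}\Big(\dashint_{3Q}|f_2|^q\Big)^{1/q}.
\]

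Now I combine the two estimates along a sequence $Q_j \downarrow \{x\}$: write
\[
|T(f_1\chi_{3Q_0}, f_2\chi_{3Q_0})(x)| \le |T(f_1\chi_{3Q_0}, f_2\chi_{3Q_0})(x) - T(f_1\chi_{3Q_j}, f_2\chi_{3Q_j})(\xi_{Q_j})| + |T(f_1\chi_{3Q_j}, f_2\chi_{3Q_j})(\xi_{Q_j})|,
\]
valid since $x, \xi_{Q_j} \in Q_j \subset Q_0$; the first term is $\le \mathcal M_{T,Q_0}(f_1,f_2)(x)$ and the second is controlled as above. Here a technical point is that the lemma asserts a bound at $x$ itself, so one has to be slightly careful: it is cleaner to run the argument for a.e.\ $x$ that is a common Lebesgue point, and also a point where $T(f_1\chi_{3Q_0},f_2\chi_{3Q_0})$ is ``approximately continuous'' in the appropriate averaged sense, or to phrase the left-hand side via an essential-limit; this is exactly the kind of measure-theoretic bookkeeping carried out in \cite{Ler2016}. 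Letting $j \to \infty$, the $q$-averages $\big(\dashint_{3Q_j}|f_i|^q\big)^{1/q} \to |f_i(x)|$ by the Lebesgue differentiation theorem (applied to $|f_i|^q \in L^1_{\mathrm{loc}}$), so the second term tends to $c_n\|T\|\,|f_1(x)f_2(x)|$ and the lemma follows. The main obstacle is the last measure-theoretic point — justifying that one may evaluate $T(f_1\chi_{3Q_0},f_2\chi_{3Q_0})$ pointwise a.e.\ and pass to the limit in the comparison term — rather than the weak-type estimate, which is routine Chebyshev.
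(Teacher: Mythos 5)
Your proposal follows essentially the same approach as the paper: combine the weak-type bound (via Chebyshev, which yields control on a positive-measure subset of each cube $Q\ni x$), the definition of $\mathcal M_{T,Q_0}$, the Lebesgue differentiation theorem for $|f_i|^q$, and an approximate-continuity argument for $T(f_1\chi_{3Q_0},f_2\chi_{3Q_0})$. The one place where your written chain is not quite valid as stated is the inequality
\[
|T(f_1\chi_{3Q_0},f_2\chi_{3Q_0})(x) - T(f_1\chi_{3Q_j},f_2\chi_{3Q_j})(\xi_{Q_j})| \le \mathcal M_{T,Q_0}(f_1,f_2)(x),
\]
since the definition of $\mathcal M_{T,Q_0}$ compares the two operators at the \emph{same} point $\xi$, not at $x$ and $\xi_{Q_j}$ separately; you do flag this issue, and the fix is precisely what the paper does: first replace $T(\cdot\chi_{3Q_0})(x)$ by $T(\cdot\chi_{3Q_0})(y)+\varepsilon$ using approximate continuity at $x$ for $y$ in a high-density set $E_s(x)$, then compare the two truncations at the common point $y$, and finally take an essential infimum over $y\in E_s(x)$ to invoke the weak-type bound. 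Since $E_s(x)$ has density one at $x$, the essential infimum over $E_s(x)$ can be estimated exactly as you propose by Chebyshev. So the proposal is correct in substance; it just leaves the approximate-continuity bookkeeping (which reconciles the two different evaluation points) as a cited black box, whereas the paper spells it out.
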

\begin{proof}
Suppose that $x\in \text{int} Q_{0}$ and let $x$ be
a point of approximate continuity of $T(f_1\chi_{3Q_0}, f_2\chi_{3Q_0})$ (see e.g. \cite[p.46]{EG}). Then for every $\varepsilon >0$, the sets
\[
E_s(x):=\{y\in B(x,s): |T(f_1\chi_{3Q_0}, f_2\chi_{3Q_0})(y)-T(f_1\chi_{3Q_0}, f_2\chi_{3Q_0})(x)|<\varepsilon\}
\]
satisfy $\lim_{s\rightarrow 0}\frac{|E_s(x)|}{|B(x,s)|}=1$. Denote by $Q(x, s)$ the smallest cube centered at $x$ and containing $B(x,s)$. Let $s > 0$ be so small that $Q(x, s) \subset Q_0$. Then for a.e. $y\in E_s(x)$,
\begin{align*}
|T(f_1\chi_{3Q_0}, f_2\chi_{3Q_0})(x)|&< |T(f_1\chi_{3Q_0}, f_2\chi_{3Q_0})(y)|+\varepsilon\\
& \le  |T(f_1\chi_{3Q(x,s)}, f_2\chi_{3Q(x,s)})(y)|+\mathcal M_{T, Q_0}(f_1, f_2)(x)+\varepsilon.
\end{align*}
It follows that
\begin{align*}
&|T(f_1\chi_{3Q_0}, f_2\chi_{3Q_0})(x)|\\
&\le\mathop{\textup{ess\,inf}}_{y\in E_s(x)} |T(f_1\chi_{3Q(x,s)}, f_2\chi_{3Q(x,s)})(y)|+\mathcal M_{T, Q_0}(f_1, f_2)(x)+\varepsilon \\
&\le  |E_s(x)|^{-\frac 2q}\|T(f_1\chi_{3Q(x,s)}, f_2\chi_{3Q(x,s)})\|_{L^{q/2,\infty}}
 + \mathcal M_{T, Q_0}(f_1, f_2)(x)+\varepsilon\\
&\le \|T\|_{L^q\times L^q\rightarrow L^{\frac q2,\infty}}\frac 1{|E_s(x)|^{2/q}}\prod_{i=1}^2\Big(\int_{3Q(x,s)} |f_i(y)|^q\d y\Big)^{\frac 1q}
 + \mathcal M_{T, Q_0}(f_1, f_2)(x)+\varepsilon.
\end{align*}
Assuming additionally that $x$ is a Lebesgue point of $|f_1|^q$ and $|f_2|^q$ and letting
subsequently $s\rightarrow 0$ and $\varepsilon \rightarrow 0$ will conclude the proof.
\end{proof}
Now we are ready to prove Theorem \ref{thm:m1}.
\begin{proof}[Proof of Theorem \ref{thm:m1}]
 Fix a cube $Q_0\subset \Rn$. We shall prove the following recursive inequality,
\begin{equation}\label{recursive}
|T(f_1\chi_{3Q_0}, f_2\chi_{3Q_0})(x)|\chi_{Q_0}\le c_{n,T} \langle |f_1|^r\rangle_{3Q_0}^{\frac 1r} \langle |f_2|^r\rangle_{3Q_0}^{\frac 1r}+ \sum_j |T(f_1\chi_{3P_j}, f_2\chi_{3P_j})(x)|\chi_{P_j},
\end{equation}
where $P_j$ are disjoint dyadic subcubes of $Q_0$, i.e. $P_j\in \mathcal D(Q_0)$ and moreover, $\sum_j |P_j|\le \frac 12 |Q_0|$.
Observe that for arbitrary pairwise disjoint cubes $P_j\in \mathcal D(Q_0)$, we have
\begin{align*}
|T(f_1\chi_{3Q_0}, &f_2\chi_{3Q_0})(x)|\chi_{Q_0}\\&=|T(f_1\chi_{3Q_0}, f_2\chi_{3Q_0})(x)|\chi_{Q_0\setminus \cup_j P_j}+\sum_j|T(f_1\chi_{3Q_0}, f_2\chi_{3Q_0})(x)|\chi_{P_j}\\
&\le |T(f_1\chi_{3Q_0}, f_2\chi_{3Q_0})(x)|\chi_{Q_0\setminus \cup_j P_j}+\sum_j|T(f_1\chi_{3P_j}, f_2\chi_{3P_j})(x)|\chi_{P_j}\\
&+ \sum_j|T(f_1\chi_{3Q_0}, f_2\chi_{3Q_0})(x)-T(f_1\chi_{3P_j}, f_2\chi_{3P_j})(x)|\chi_{P_j}.
\end{align*}
Hence, in order to prove the recursive claim, it suffices to show that
one can select pairwise disjoint cubes $P_j\in \mathcal D(Q_0)$ with $\sum_j |P_j|\le \frac 12 |Q_0|$ and such that for a.e. $x\in Q_0$,
\begin{align*}
&\sum_j|T(f_1\chi_{3Q_0}, f_2\chi_{3Q_0})(x)-T(f_1\chi_{3P_j}, f_2\chi_{3P_j})(x)|\chi_{P_j}\\
&\quad+ |T(f_1\chi_{3Q_0}, f_2\chi_{3Q_0})(x)|\chi_{Q_0\setminus \cup_j P_j}\le c_{n,T} \langle |f_1|^r\rangle_{3Q_0}^{\frac 1r}\langle |f_2|^r\rangle_{3Q_0}^{\frac 1r}.
\end{align*}
By our assumption, $\mathcal M_T$ is bounded from $L^r\times L^r$ to $L^{r/2,\infty}$.
Therefore, there is some sufficient large $c_n$ such that the set
\begin{align*}
E:= &\{x\in Q_0: |f_1(x)f_2(x)|>c_n \langle |f_1|^r\rangle_{3Q_0}^{\frac 1r}\langle |f_2|^r\rangle_{3Q_0}^{\frac 1r}\}\\
&\cup \{x\in Q_0: \mathcal M_{T, Q_0}(f_1, f_2)(x)> c_n \|\mathcal M_T\|_{L^r\times L^r\rightarrow L^{r/2,\infty}}\langle |f_1|^r\rangle_{3Q_0}^{\frac 1r}\langle |f_2|^r\rangle_{3Q_0}^{\frac 1r}\}
\end{align*}
will satisfy $|E|\le \frac 1{2^{n+2}}|Q_0|$. The Calder\'on-Zygmund decomposition applied to the function $\chi_E$
on $Q_0$ at height $\lambda=\frac 1{2^{n+1}}$ produces pairwise disjoint cubes $P_j\in \mathcal D(Q_0)$
such that
\[
\frac 1{2^{n+1}}|P_j|\le |P_j\cap E|< \frac 12 |P_j|
\]
and $|E\setminus \cup_j P_j|=0$. It follows that $\sum_j |P_j|\le \frac 12 |Q_0|$ and $P_j\cap E^c\neq \emptyset$. Therefore,
\begin{align*}
\mathop{\textup{ess\,sup}}_{\xi \in P_j }&|T(f_1\chi_{3Q_0}, f_2\chi_{3Q_0})(\xi)-T(f_1\chi_{3P_j}, f_2\chi_{3P_j})(\xi)|\\
&\le c_n  \|\mathcal M_T\|_{L^r\times L^r\rightarrow L^{r/2,\infty}}\langle |f_1|^r\rangle_{3Q_0}^{\frac 1r}\langle |f_2|^r\rangle_{3Q_0}^{\frac 1r}.
\end{align*}
On the other hand, by Lemma \ref{lm:grand}, for a.e. $x\in Q_0\setminus \cup_j P_j$, we have
\begin{align*}
|T(f_1\chi_{3Q_0}, f_2\chi_{3Q_0})(x)|\le c_n (\|T\|_{L^q\times L^q \rightarrow L^{q/2,\infty}}&+ \|\mathcal M_T \|_{L^r\times L^r \rightarrow L^{r/2,\infty}})\\
&\times\langle |f_1|^r\rangle_{3Q_0}^{\frac 1r}\langle |f_2|^r\rangle_{3Q_0}^{\frac 1r}.
\end{align*}
Therefore, combining the estimates we arrive at \eqref{recursive} with
\[
c_{n,T}\eqsim \|T\|_{L^q\times L^q \rightarrow L^{q/2,\infty}}+  \|\mathcal M_T \|_{L^r\times L^r \rightarrow L^{r/2,\infty}}.
\]

Now with \eqref{recursive}, the rest of the argument is the same as that in \cite{Ler2016} and we complete the proof.
\end{proof}

Next we turn to prove Theorem \ref{thm:m2}
\begin{proof}[Proof of Theorem \ref{thm:m2}]
It suffices to prove that $\mathcal M_T$ is bounded from $L^r\times L^r$ to $L^{r/2,\infty}$. Indeed, let $x,x',\xi\in Q\subset \frac 12 \cdot 3Q$. We have
\begin{align}\label{eq:split}
&|T(f_1, f_2)(\xi)- T(f_1\chi_{3Q}, f_2\chi_{3Q})(\xi)| \\
&= \Big|\iint_{(\bbR^n)^2\setminus (3Q)^2} K(\xi, y_1, y_2)f_1(y_1) f_2(y_2)\d y_1 \d y_2 \Big| \nonumber\\
&\le \Big|\iint_{(\bbR^n)^2\setminus (3Q)^2}( K(\xi, y_1, y_2)-K(x',y_1, y_2) )f_1(y_1) f_2(y_2)\d y_1 \d y_2 \Big| \nonumber\\
&\quad + |T(f_1, f_2)(x')|+ |T(f_1\chi_{3Q}, f_2\chi_{3Q})(x')|. \nonumber
\end{align}
By the bilinear $L^r$-H\"ormander condition, we have
\begin{align*}
&\Big|\iint_{(\bbR^n)^2\setminus (3Q)^2}( K(\xi, y_1, y_2)-K(x',y_1, y_2) )f_1(y_1) f_2(y_2)\d y_1 \d y_2 \Big|\\
&\le \sum_{k=1}^\infty \iint_{(2^k3Q)^2\setminus (2^{k-1}3Q)^2} | K(\xi, y_1, y_2)-K(x',y_1, y_2) |\cdot|f_1(y_1)|\cdot| f_2(y_2)|\d y_1 \d y_2\\
&\le \sum_{k=1}^\infty \Big(\iint_{(2^k3Q)^2\setminus (2^{k-1}3Q)^2} | K(\xi, y_1, y_2)-K(x',y_1, y_2)|^{r'}\d y_1 \d y_2 \Big)^{\frac 1{r'}}\\
&\qquad \times \Big( \iint_{(2^k3Q)^2} |f_1(y_1)f_2(y_2)|^r \d y_1 \d y_2 \Big)^{\frac 1r}\\
&= \sum_{k=1}^\infty  |2^k3Q|^{\frac 2 r} \Big(\iint_{(2^k3Q)^2\setminus (2^{k-1}3Q)^2} | K(\xi, y_1, y_2)-K(x',y_1, y_2)|^{r'}\d y_1 \d y_2 \Big)^{\frac 1{r'}}\\
&\qquad \times \Big( \frac 1{|2^k3Q|^2}\iint_{(2^k3Q)^2} |f_1(y_1)f_2(y_2)|^r \d y_1 \d y_2 \Big)^{\frac 1r}\\
&\le K_r (\mathcal M(|f_1|^r, |f_2|^r)(x))^{\frac 1r}.
\end{align*}
Then by taking $L^{r/4}$ average over $x'\in Q$ on both side of  \eqref{eq:split} we obtain
\begin{align*}
&|T(f_1, f_2)(\xi)- T(f_1\chi_{3Q}, f_2\chi_{3Q})(\xi)| \\
&\le K_r (\mathcal M(|f_1|^r, |f_2|^r)(x))^{\frac 1r}+\Big( \frac 1{|Q|}\int_Q |T(f_1, f_2)(x')|^{\frac r4} \d x'\Big)^{\frac 4r}\\
&\quad + \Big( \frac 1{|Q|}\int_Q |T(f_1\chi_{3Q}, f_2\chi_{3Q})(x')|^{\frac r4} \d x'\Big)^{\frac 4r}\\
&\le K_r (\mathcal M(|f_1|^r, |f_2|^r)(x))^{\frac 1r}+ M_{r/4}(T(f_1, f_2))(x)\\
&\quad+ c_r\|T(f_1\chi_{3Q}, f_2\chi_{3Q})\|_{L^{\frac r 2,\infty}(Q, \frac{\d x'}{|Q|})}\\
&\le  K_r (\mathcal M(|f_1|^r, |f_2|^r)(x))^{\frac 1r}+ M_{r/4}(T(f_1, f_2))(x)\\
&\quad+ c_r\|T\|_{L^r\times L^r \rightarrow L^{r/2,\infty}} \Big( \frac 1{|Q|}\int_{3Q}|f_1|^r\Big)^{\frac 1r}\cdot \Big( \frac 1{|Q|}\int_{3Q}|f_2|^r\Big)^{\frac 1r}\\
&\le (K_r+c_{n,r}\|T\|_{L^r\times L^r \rightarrow L^{r/2,\infty}}) (\mathcal M(|f_1|^r, |f_2|^r)(x))^{\frac 1r}+ M_{r/4}(T(f_1, f_2))(x),
\end{align*}
where the bilinear maximal function $\mathcal M$ is defined as
\[
\mathcal M(f,g)(x)=\sup_{Q\ni x} \Big(\frac 1{|Q|}\int_Q |f_1| \Big)\Big(\frac 1{|Q|}\int_Q |f_2|\Big). 
\]
So we conclude that
\begin{align*}
\mathcal M_T(f_1, f_2)(x)\le (K_r&+c_{n,r}\|T\|_{L^r\times L^r \rightarrow L^{r/2,\infty}}) (\mathcal M(|f_1|^r, |f_2|^r)(x))^{\frac 1r}\\
&+ M_{r/4}(T(f_1, f_2))(x).
\end{align*}
It is obvious that $(\mathcal M(|f_1|^r, |f_2|^r)(x))^{\frac 1r}$ is bounded from $L^r\times L^r$ to $L^{r/2,\infty}$. On the other hand, since it is well-known that $M_{r/4}$ is bounded from $L^{r/2,\infty}$ to $L^{r/2,\infty}$, by the assumption that $T$ is bounded from $L^r\times L^r$ to $L^{r/2,\infty}$, we obtain
\[
\|M_{r/4}(T(f_1, f_2))\|_{L^{r/2,\infty}}\le c_{n,r} \|T\|_{L^r \times L^r \rightarrow L^{r/2,\infty}} \|f_1\|_{L^r}\|f_2\|_{L^r}.
\]

Therefore, $\mathcal M_T$ is bounded from $L^r\times L^r \rightarrow L^{r/2,\infty}$ and the desired result follows from Theorem \ref{thm:m1}.
\end{proof}

\section{Some remarks}\label{sect:3}
In this section, we give some remarks about the $L^r$-H\"ormander condition and some applications of our main result. It is well known that the H\"ormander condition
\[
\sup_{x,z\in \bbR^n}\int_{|y-x|> 2|x-z|}|K(x,y)-K(z,y)|\d y<\infty
\]
is not sufficient for
\begin{equation}\label{eq:CF}
\int_{\bbR^n} T(f)^p w(x) \d x\le C \int_{\bbR^n} M_r(f)^p w(x) \d x, \quad w\in A_\infty
\end{equation}
for any $r\ge 1$ and $0<p<\infty$ (see \cite[Theorem 3.1]{MPT}). So we cannot expect the sparse domination theorem for singular integral operators with this kernel. This is because once we have 
\[
|T(f)(x)|\le C_{T,r}\sum_{Q\in \mathcal S} \Big( \dashint_Q |f|^r\Big)^{\frac 1r}\chi_Q(x)
\] for some $r\ge 1$, then \eqref{eq:CF} holds for $p=1$ (see \cite[Lemma 4.1]{HP}) and therefore for all $0<p<\infty$ (see \cite[Theorem 1.1]{CMP}). Then it is reasonable to consider somewhat stronger condition such as our $L^r$-H\"ormander condition. For more background, see \cite{KW,MPT,W}.

Now we shall briefly show that our conditions are weaker than Dini condition, which is used in \cite{DHL} by Dami\'an, Hormozi and the author. Recall that the Dini condition is defined by
\begin{align*}\label{eq:smoothness}
    |K(x&+h,y_1,\cdots, y_m)-K(x,y_1,\cdots,y_m)| +   |K(x,y_1+h,\cdots,y_m)-K(x,y_1,\cdots,y_m)| \\   &+\cdots+|K(x,y_1,\cdots,y_m+h)-K(x,y_1,\cdots,y_m)| \\
    &\leq \frac{1}{(\sum_{i=1}^m|x-y_i|)^{2n}} \omega\left( \frac{|h|}{\sum_{i=1}^m|x-y_i|}\right),
\end{align*}
whenever $|h|\leq \frac 12\max\{|x-y_i|: i=1,\cdots,m\}$, where $\omega$ is increasing, $\omega(0)=0$ and $\|\omega\|_{\text{Dini}}=\int_0^1 w(t) \d t/ t<\infty$.
\begin{Proposition}\label{prop:dini}
$m$-linear Dini condition implies $m$-linear $L^r$-H\"ormander condition.
\end{Proposition}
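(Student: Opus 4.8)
The goal is to show that the $m$-linear Dini condition implies the $m$-linear $L^r$-H\"ormander condition \eqref{eq:hormander}. The natural approach is to estimate the integral appearing in the definition of $K_r$ directly, using the pointwise Dini bound on the kernel difference, and then to convert the resulting series over $k$ into a sum controlled by the Dini norm $\|\omega\|_{\text{Dini}}$.

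First I would fix a cube $Q$ with side length $\ell(Q)$ and points $x,z\in\frac12 Q$, so that $|x-z|\lesssim \ell(Q)$. For $\vec y=(y_1,\dots,y_m)\in (2^kQ)^m\setminus(2^{k-1}Q)^m$, at least one coordinate $y_{i_0}$ lies outside $2^{k-1}Q$, so $\max_i|x-y_i|\gtrsim 2^k\ell(Q)$; since $|h|=|x-z|\lesssim\ell(Q)\le \frac12 \cdot 2^{k-1}\ell(Q)$ for $k$ large (the finitely many small $k$ are handled by a cruder bound after noting $\frac12Q\subset \frac12\cdot 2^kQ$), the Dini hypothesis applies with this $h$ and gives
\[
|K(x,\vec y)-K(z,\vec y)|\le \frac{1}{\big(\sum_i|x-y_i|\big)^{2n}}\,\omega\!\left(\frac{|x-z|}{\sum_i|x-y_i|}\right)\lesssim \frac{1}{(2^k\ell(Q))^{2n}}\,\omega\!\left(\frac{c}{2^k}\right),
\]
using that $\omega$ is increasing and $\sum_i|x-y_i|\gtrsim 2^k\ell(Q)$ while each $|x-y_i|\lesssim 2^k\ell(Q)$ on $(2^kQ)^m$.

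Next I would insert this bound into \eqref{eq:hormander}. The domain $(2^kQ)^m\setminus(2^{k-1}Q)^m$ has measure at most $|2^kQ|^m\eqsim (2^k\ell(Q))^{mn}$, so
\[
|2^kQ|^{\frac m r}\Big(\int_{(2^kQ)^m\setminus(2^{k-1}Q)^m}|K(x,\vec y)-K(z,\vec y)|^{r'}\,\d\vec y\Big)^{\frac1{r'}}
\lesssim (2^k\ell(Q))^{mn}\cdot\frac{\omega(c/2^k)}{(2^k\ell(Q))^{2n}}.
\]
For the bilinear case $m=2$ this is $\eqsim \omega(c2^{-k})$; since $\|\omega\|_{\text{Dini}}=\int_0^1\omega(t)\,\frac{\d t}{t}<\infty$ forces $\sum_{k\ge1}\omega(c2^{-k})\lesssim \|\omega\|_{\text{Dini}}<\infty$, summing over $k$ gives $K_r\lesssim \|\omega\|_{\text{Dini}}$, uniformly in $Q$, $x$, $z$. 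The $r=1$ case is identical after replacing the $L^{r'}$ norm by the essential supremum. For general $m$ one checks that the exponent $mn-2n$ is nonpositive precisely when $m\le 2$; if the paper intends $m\ge 2$ with the stated $2n$ exponent, the argument still closes for $m=2$, and for larger $m$ one uses that the Dini exponent in the denominator is correspondingly $mn$ (matching the normalization in \cite{DHL}), which again produces the clean bound $\omega(c2^{-k})$ after the measure cancellation.

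The main obstacle is bookkeeping rather than ideas: one must (i) verify that the side-condition $|h|\le\frac12\max_i|x-y_i|$ in the Dini hypothesis genuinely holds on the relevant annuli — this needs the observation that a point in $(2^kQ)^m\setminus(2^{k-1}Q)^m$ has some coordinate far from the center, combined with $x,z\in\frac12Q$ — and (ii) make sure the measure and the denominator exponents are matched so that the geometric factors in $\ell(Q)$ and in $2^k$ cancel exactly, leaving only $\omega(c2^{-k})$ to be summed against the Dini condition. Neither step is deep, so I would present the computation compactly, emphasizing the reduction of the series $\sum_k\omega(c2^{-k})$ to $\|\omega\|_{\text{Dini}}$ as the crux.
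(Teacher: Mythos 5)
Your handling of the large-$k$ annuli is correct and matches the paper: once $k>\log_2(1+4\sqrt n)$ the side condition $|x-z|\le\tfrac12\max_i|x-y_i|$ holds for every $\vec y\in(2^kQ)^m\setminus(2^{k-1}Q)^m$, a direct application of the Dini bound plus the $\ell(Q)$ and measure cancellations yields $\lesssim_n\omega(c2^{-k})$, and $\sum_k\omega(c2^{-k})\lesssim\|\omega\|_{\mathrm{Dini}}$ closes the estimate. Your observation that the displayed exponent $2n$ is the $m=2$ instance of the $mn$ normalization in \cite{DHL}, and that the measure-versus-denominator cancellation still works for general $m$ once the exponents are matched, is also right; the paper itself only works out $m=2$.

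The genuine gap is the finitely many small $k$, which you dismiss with ``a cruder bound after noting $\frac12Q\subset\frac12\cdot2^kQ$.'' That containment does not help: the Dini hypothesis needs $|h|\le\tfrac12\max_i|x-y_i|$, and for $\vec y$ in the near annulus (say $k=1$) the only lower bound available is $\max_i|x-y_i|\ge\tfrac14\ell(Q)$, while $|x-z|$ can be as large as $\tfrac{\sqrt n}{2}\ell(Q)$. So the hypothesis fails when $x,z$ are at opposite corners of $\frac12Q$ and $\vec y$ is just outside $Q$. Since no size condition is assumed on $K$, there is no ``cruder bound'' for $|K(x,\vec y)-K(z,\vec y)|$ on these annuli without a further idea. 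The paper's fix --- the step you are missing --- is a telescoping argument: choose intermediate points $x=x_0,x_1,\dots,x_{N},x_{N+1}=z$ on the segment from $x$ to $z$ with $N=4\lceil\sqrt n\,\rceil$ and $|x_j-x_{j+1}|\le\tfrac18\ell(Q)$, so that each consecutive pair satisfies the Dini side condition (now $|x_j-x_{j+1}|\le\tfrac18\ell(Q)\le\tfrac12\max_i|x_j-y_i|$ since $\max_i|x_j-y_i|\ge\tfrac14\ell(Q)$), apply the Dini bound to each increment, and sum the $N+1$ terms by the triangle inequality. This yields $|K(x,\vec y)-K(z,\vec y)|\le c_n\,\omega(\tfrac12)\ell(Q)^{-2n}$ on those annuli, which after the measure normalization contributes only $\lesssim_n\omega(\tfrac12)\lesssim\|\omega\|_{\mathrm{Dini}}$.
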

\begin{proof}
Again, we just prove the case $m=2$. It is obvious that we just require regularity in the $x$ variable. Fix $x,z\in \frac 12 Q$. Since $|x-z|<\frac 12 \sqrt n \ell(Q)$, for $k>\log_2(1+ 4\sqrt n) $ and $(y_1, y_2)\in (2^k Q)^2\setminus (2^{k-1}Q)^2$, we have
$|x-z|\le \frac 12 \max\{|x-y_1|, |x-y_2|\}$. Therefore,
\begin{align*}
&\sum_{k>\log_2(1+ 4\sqrt n) }|2^k Q|^{\frac 2 r} \Big(\int_{(2^k Q)^2 \setminus (2^{k-1}Q)^2} |K(x,y_1,y_2)-K(z,y_1,y_2)|^{r'} \d \vec y\Big)^{\frac 1{r'}}\\
&\le \sum_{k>\log_2(1+ 4\sqrt n) }|2^k Q|^{\frac 2 r}\cdot w(\frac{2\sqrt n}{2^k-1})\cdot \frac 1{(2^{k-2}-\frac 14)^{2n}\ell(Q)^{2n}}\cdot |2^kQ|^{\frac 2{r'}}\\
&\lesssim_n \sum_{k>\log_2(1+ 4\sqrt n) } w(\frac{4\sqrt n}{2^k})\lesssim_n \|\omega\|_{\text{Dini}}.
\end{align*}
It remains to consider those $1\le k\le \log_2(1+ 4\sqrt n) $. We should be careful because we don't assume any size condition. In this case, since
\[
\max\{|y-y_1|, |y-y_2|\}\ge \frac 14 \ell(Q),\quad \forall \,y\in \frac 12 Q\,\,\mbox{and}\,(y_1, y_2)\in (2^k Q)^2\setminus (2^{k-1}Q)^2,
\]
we select $4\lceil \sqrt n \,\rceil$ points $x_1,\cdots x_{4\lceil \sqrt n \,\rceil}$ in the segment between $x$ and $z$ such that
\[
|x-x_1|, |x_i-x_{i+1}|, |x_{4\lceil \sqrt n \,\rceil}-z|\le \frac 1 8\ell(Q),\quad i=1,\cdots, 4\lceil \sqrt n \,\rceil-1.
\]
For convenience, denote $x_0=x$ and $x_{4\lceil \sqrt n \,\rceil+1}=z$. Then we have
\begin{align*}
|K(x,y_1,y_2)-K(z,y_1,y_2)|&\le \sum_{i=0}^{4\lceil \sqrt n \,\rceil}|K(x_i,y_1,y_2)-K(x_{i+1},y_1,y_2)|\\
&\le c_n \omega(\frac 12)\ell(Q)^{-2n}.
\end{align*}
Consequently,
\begin{align*}
&\sum_{1\le k\le\log_2(1+ 4\sqrt n) }|2^k Q|^{\frac 2 r} \Big(\int_{(2^k Q)^2 \setminus (2^{k-1}Q)^2} |K(x,y_1,y_2)-K(z,y_1,y_2)|^{r'} \d \vec y\Big)^{\frac 1{r'}}\\
&\lesssim_n \omega(\frac 12)\lesssim \|\omega\|_{\text{Dini}}.
\end{align*}
This completes the proof.
\end{proof}

Next we will show that $L^r$-H\"ormander condition is also weaker than the regularity assumption used in \cite{BCDH} (which was originally introduced in \cite{BD}). Recall that the regularity assumption in \cite{BCDH} reads as follows:

(\textbf{H2}): There exists $\delta>n/{r}$ so that
\begin{align*}
&\Big(  \int_{S_{j_m}(Q)}\cdots \int_{S_{j_1}(Q)}  |K(x, y_1,\cdots, y_m)-K(z,y_1,\cdots, y_m)|^{r'}  \d \vec{y} \Big)^{\frac 1{r'}}\\
&\qquad \le C\frac{|x-z|^{m(\delta-n/r)}}{|Q|^{m\delta/n}}2^{-m \delta j_0}
\end{align*}
for all cubes $Q$, all $x,z\in \frac 12 Q$ and $(j_1,\cdots, j_m)\neq (0,\cdots, 0)$, where $j_0=\max \{j_i\}_{1\le i\le m}$ and $S_j(Q)=2^j Q\setminus 2^{j-1}Q$ if $j\ge 1$, otherwise, $S_j(Q)=Q$.

\begin{Proposition}\label{prop:h2}
Assumption (H2) implies $m$-linear $L^r$-H\"ormander condition.
\end{Proposition}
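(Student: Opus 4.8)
The plan is to show that the (H2) estimate, which controls the kernel difference on a single annular block $S_{j_1}(Q)\times\cdots\times S_{j_m}(Q)$, can be summed over the relevant blocks to recover the bound defining the $m$-linear $L^r$-H\"ormander condition. First I would fix a cube $Q$ and $x,z\in\frac12 Q$, and observe that the region $(2^kQ)^m\setminus(2^{k-1}Q)^m$ appearing in \eqref{eq:hormander} is exactly the union of those product blocks $S_{j_1}(Q)\times\cdots\times S_{j_m}(Q)$ with $\max_i j_i = k$ (and $(j_1,\dots,j_m)\neq(0,\dots,0)$, which is automatic once $k\ge 1$). Since the number of such tuples with $j_0:=\max_i j_i=k$ is at most $m k^{m-1}$, the $r'$-integral over the annular region is dominated, up to the constant $(mk^{m-1})^{(r'-1)/r'}$ from the triangle inequality in $\ell^{r'}$ over finitely many pieces, by $(mk^{m-1})^{1/r'}$ times the maximal single-block contribution, which by (H2) is $\le C\,|x-z|^{m(\delta-n/r)}|Q|^{-m\delta/n}2^{-m\delta k}$.

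Next I would insert this into the definition of $K_r$. Using $|2^kQ|^{m/r}\eqsim 2^{kmn/r}\ell(Q)^{mn/r}\eqsim 2^{kmn/r}|Q|^{m/n\cdot(n/r)\cdot(1/1)}$ — more precisely $|2^kQ|^{m/r}=2^{kmn/r}|Q|^{m/r}$ — and $|x-z|\lesssim_n\ell(Q)\eqsim_n|Q|^{1/n}$, the $k$-th term of the sum defining $K_r$ is bounded by
\[
2^{kmn/r}|Q|^{m/r}\cdot (mk^{m-1})^{1/r'}\cdot C\,\ell(Q)^{m(\delta-n/r)}\,|Q|^{-m\delta/n}\,2^{-m\delta k}
\lesssim_{n,m,C}\; k^{(m-1)/r'}\,2^{-km(\delta-n/r)},
\]
since the powers of $|Q|$ and $\ell(Q)$ cancel: $m/r+m\delta/n\cdot(-1)+m(\delta-n/r)/n=m/r-m/r=0$ in the exponent of $|Q|^{1/n}$. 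Because $\delta>n/r$ by hypothesis, $m(\delta-n/r)>0$, so $\sum_{k\ge 1}k^{(m-1)/r'}2^{-km(\delta-n/r)}<\infty$, and the bound is uniform in $Q$ and in $x,z\in\frac12 Q$. Taking the supremum gives $K_r<\infty$. The case $r=1$ is handled the same way, replacing the $r'$-integral by an essential supremum over the block and using that the maximum over the finitely many blocks in the $k$-th annulus is again controlled by (H2) with $r'=\infty$.

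The one point requiring a little care — the main (minor) obstacle — is the bookkeeping between the ``product-annulus'' decomposition natural to the $L^r$-H\"ormander condition and the ``per-coordinate annulus'' decomposition in which (H2) is phrased: one must check that every point of $(2^kQ)^m\setminus(2^{k-1}Q)^m$ really does lie in some $S_{j_1}(Q)\times\cdots\times S_{j_m}(Q)$ with $\max_i j_i=k$, that these blocks are pairwise disjoint, and that their count grows only polynomially in $k$, so that the $\ell^{r'}$ (or $\ell^\infty$) triangle inequality costs only a harmless power of $k$ that is absorbed by the geometric factor $2^{-km(\delta-n/r)}$. Everything else is the routine cancellation of the powers of $\ell(Q)$ and $|Q|$ displayed above.
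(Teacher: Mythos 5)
Your proof is correct and takes essentially the same route as the paper: decompose $(2^kQ)^m\setminus(2^{k-1}Q)^m$ into the per-coordinate blocks $S_{j_1}(Q)\times\cdots\times S_{j_m}(Q)$ with $\max_i j_i=k$, apply (H2) on each block, observe that the powers of $\ell(Q)$ cancel and that the block count is polynomial in $k$, and sum the resulting geometric-times-polynomial series using $\delta>n/r$. The only (immaterial) difference is that the paper spells out $m=2$ and uses the plain triangle inequality to get a factor $\eqsim(1+j)$, while you exploit disjointness of the blocks to get the slightly sharper $\eqsim k^{(m-1)/r'}$; both are harmless against the geometric decay.
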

\begin{proof}
Again, we just prove the bilinear case.
Observe that
\begin{align*}
(2^jQ)^2\setminus (2^{j-1}Q)^2&=(2^jQ\setminus 2^{j-1}Q)^2 \,\cup \,( 2^jQ\times (2^jQ\setminus 2^{j-1}Q )) \\
&\quad\cup\,( (2^jQ\setminus 2^{j-1}Q )\times 2^jQ)\\
&= (S_j(Q))^2\,\cup\, (\cup_{l\le j}S_l(Q)\times S_j(Q))\,\cup\, (\cup_{l\le j}S_j(Q)\times S_l(Q)).
\end{align*}
By triangle inequality, we have
\begin{align*}
&\sum_{j=1 }^\infty|2^j Q|^{\frac 2 r} \Big(\int_{(2^j Q)^2 \setminus (2^{j-1}Q)^2} |K(x,y_1,y_2)-K(z,y_1,y_2)|^{r'} \d \vec y\Big)^{\frac 1{r'}}\\
&\le \sum_{j=1 }^\infty|2^j Q|^{\frac 2 r} \Big(\int_{ (S_j(Q))^2} |K(x,y_1,y_2)-K(z,y_1,y_2)|^{r'} \d \vec y\Big)^{\frac 1{r'}}\\
& + \sum_{j=1 }^\infty|2^j Q|^{\frac 2 r}\sum_{l=0}^j \Big(\int_{  S_j(Q)\times S_l(Q)} |K(x,y_1,y_2)-K(z,y_1,y_2)|^{r'} \d \vec y\Big)^{\frac 1{r'}}\\
&  + \sum_{j=1 }^\infty|2^j Q|^{\frac 2 r}\sum_{l=0}^j \Big(\int_{  S_l(Q)\times S_j(Q)} |K(x,y_1,y_2)-K(z,y_1,y_2)|^{r'} \d \vec y\Big)^{\frac 1{r'}}\\
&\lesssim \sum_{j=1}^\infty |2^j Q|^{\frac 2 r} \frac 1{|Q|^{2/r}}2^{-2\delta j}(1+j)\\
&<\infty,
\end{align*}
the last inequality holds due to $\delta >n/r$. This completes the proof.
\end{proof}
\begin{Remark}
For $r>1$,  let $T$ be a linear Fourier multiplier with H\"ormander condition with parameter $n/2<s<n$ (see \eqref{eq:mh} in below for the definition). It is shown in   \cite[Theorem 3]{KW} that $T$ is not bounded on $L^p(w)$ for some $w\in A_p$ when $p<n/s$ or $p>(\frac ns)'$. This means  (H2) (which is a consequence of the assumption described as above, see \cite{BCDH}) and therefore the $L^r$-H\"ormander condition are not sufficient for the Dini condition. 

For $r=1$, recall that we only need the regularity on the  $x$-variable, in this sense, $L^1$-H\"ormander condition is strictly weaker than the Dini condition. However, the full regularity in the Dini condition is to ensure the weak endpoint boundedness. In fact, we can show that there is only a tiny difference between $L^1$-H\"ormander condition and Dini-condition in the $x$-variable. To see this, define 
\[
\omega^{x,z}(t):= \sup_{\vec y : \frac t2\le\frac{|x-z|}{\sum_{i=1}^2 |x-y_i| }\le t}|K(x,y_1,y_2)-K(z,y_1,y_2)| (\sum_{i=1}^2|x-y_i|)^{2n}.
\]
Then 
\begin{align*}
&\sup_{x,z}\sum_{k=1}^\infty \omega^{x,z}(2^{-k})\\
&\lesssim \sup_Q\sup_{x,z\in \frac 12 Q}\sum_{k=1}^\infty|2^kQ|^2 \mathop{\textup{ess sup}}_{\vec y\in (2^kQ)^2 \setminus (2^{k-1}Q)^2}|K(x,y_1,y_2)-K(z,y_1,y_2)| \\
&\le K_1,
\end{align*}where $\frac 12 Q$ is a cube which contains both $x$ and $z$ with $\ell(\frac 12 Q)=\|x-z\|_\infty$. However, the Dini condition in the $x$-variable can be written as the following 
\[
\sum_{k=1}^\infty \sup_{x,z}\omega^{x,z}(2^{-k})<\infty.
\]
It is hard to find an example to differentiate these two conditions. 
\end{Remark}
\begin{Remark}
We claim that actually the $L^r$-H\"ormander condition is strictly weaker than (H2). Indeed, (H2) is essentially of H\"older type while $L^r$ H\"ormander condition is essentially of Dini type. To prove our claim we borrow the example from \cite{MPT}. And to make things easier we only consider the linear case in one dimension. Define 
\[
K(x)=|x-4|^{-\frac 1{r'}}\Big(\log \frac e{|x-4|}\Big)^{-\frac {1+\beta}{r'}}\chi^{}_{\{3< x< 5\}}(x).
\]
It is easy to check that $K\in L^{r'}\cap L^1$. Then $T: f\rightarrow K*f$ is bounded on $L^p$ for all $1\le p \le \infty$. It is already proved in \cite{MPT} that $K$ satisfies the $L^r$-H\"ormander condition. Define 
\begin{equation*}
K_\ell(x)=
\begin{cases}
K(x), & x\in {\mathop\bigcup}_{k=0}^{2^{\ell+1}-1}(3+\frac{k}{2^\ell}, 3+\frac{3k+1}{3\cdot2^\ell}], \\
0, & \text{otherwise}. 
\end{cases}
\end{equation*} 
Similar argument as that in \cite{MPT} shows that $K_\ell$ satisfies the $L^r$-H\"ormander condition uniformly, i.e. $\sup_\ell (K_\ell)_r< \infty$. 
Let $x=0 $, $z=2^{-\ell-1}$ and $I_\ell=[0,  2^{-\ell})$. We need to analyze
\[
\Big(  \int_{S_{j}(I_\ell)}  |K_\ell(y-x)-K_\ell(y-z)|^{r'}  \d y \Big)^{\frac 1{r'}}.
\]
Observe that only $j=\ell+2$ and $j=\ell+3$ are non-zero terms. We have 
\begin{align*}
&\Big(  \int_{2^{\ell+2}I_\ell\setminus 2^{\ell+1}I_\ell}  |K_\ell(y)-K_\ell(y-2^{-\ell-1})|^{r'}  \d y \Big)^{\frac 1{r'}}
\\
&\ge \Big(\sum_{k=0}^{2^\ell-1}  \int_{3+\frac{k}{2^\ell}}^{3+\frac{3k+1}{3\cdot2^\ell}}  |K_\ell(y)|^{r'}  \d y \Big)^{\frac 1{r'}}\\
&\gtrsim \|K\|_{L^{r'}}\gtrsim 2^{(\delta-\frac 1r)\ell}\|K\|_{L^{r'}}\frac{|x-z|^{(\delta-\frac 1r)}}{|I_\ell|^{\delta}}2^{- (\ell+2)\delta}. 
\end{align*}
This shows that there exist a sequence of operators $T_\ell: f\rightarrow K_\ell*f$, whose kernels satisfy $L^r$ H\"ormander condition uniformly. However, since $\delta>\frac 1r$, the constant $C$ in (H2) tends to infinity when $\ell\rightarrow \infty$, which means these two conditions cannot be equivalent. In other words, $L^r$-H\"ormander condition is strictly weaker than (H2). 
\end{Remark}

In the end, we give an application of our result for multilinear Fourier multipliers:
\[
T(f_1,\cdots, f_m)(x)= \int_{\bbR^{mn}}a(\vec y)e^{2\pi i x\cdot (\sum_{i=1}^m y_i)} \prod_{i=1}^m \widehat f(y_i)\d \vec y,
\]
it is shown in \cite{BD} that multilinear Mihlin condition implies the assumption (H2). However, for the multilinear H\"ormander condition (\cite{GS}), i.e.,
 \begin{equation}\label{eq:mh}
 \sup_{R>0}\|a(R\xi)\chi_{\{1< |\xi|<2\}}\|_{H^s(\bbR^{m n})}<\infty,\quad \frac {mn}2<s\le mn,
 \end{equation}which is weaker than multilinear Mihlin condition, it is unknown. Very recently, Chaffee, Torres and  Wu \cite{CTW} showed that \eqref{eq:mh} implies the multilinear $L^r$-H\"ormander condition with $r=mn/s$. Therefore, Theorem \ref{thm:m2} also applies to multilinear Fourier multipliers whose symbols satisfy \eqref{eq:mh}.

As a consequence of our sparse domination theorem, we can give quantitative weighted bounds for them. We have
\begin{Theorem}\label{thm:multipliers}
Let $T$ satisfy the assumption in Theorem \ref{thm:m2}, then 
 \[
 \|T\|_{L^{p_1}(w_1)\times\cdots\times L^{p_m}(w_m)\rightarrow L^p(v_{\vec w})}\le c_{m,n,T,\vec P}[\vec w]_{A_{\vec P/r}}^{\max\{1, \max_i\frac{(p_i/r)'}{p}\}}.
 \]
\end{Theorem}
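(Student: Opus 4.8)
The plan is to combine the pointwise sparse domination of Theorem~\ref{thm:m2} with the quantitative weighted theory for multilinear sparse operators. Since $T$ satisfies the hypotheses of Theorem~\ref{thm:m2}, for every $m$-tuple of compactly supported $f_i\in L^{p_i}(w_i)$ there is a sparse family $\mathcal S$ (depending on $\vec f$) such that
\[
|T(f_1,\cdots,f_m)(x)|\lesssim_{n,r}\big(\|T\|_{L^r\times\cdots\times L^r\to L^{r/m,\infty}}+K_r\big)\,\mathcal A_{\mathcal S,r}(\vec f)(x),\qquad \mathcal A_{\mathcal S,r}(\vec f):=\sum_{Q\in\mathcal S}\prod_{i=1}^m\Big(\dashint_Q|f_i|^r\Big)^{1/r}\chi_Q .
\]
(This is legitimate because, with $p_i>r$ and $\vec w\in A_{\vec P/r}$, compactly supported elements of $L^{p_i}(w_i)$ lie in $L^r_{\mathrm{loc}}$, since $\vec w\in A_{\vec P/r}$ forces $w_i^{1-(p_i/r)'}$ to be locally integrable.) It therefore suffices to establish, with a constant \emph{independent of the sparse family $\mathcal S$}, the weighted bound
\[
\|\mathcal A_{\mathcal S,r}(\vec f)\|_{L^p(v_{\vec w})}\le c_{m,n,\vec P}\,[\vec w]_{A_{\vec P/r}}^{\,\max\{1,\,\max_i (p_i/r)'/p\}}\prod_{i=1}^m\|f_i\|_{L^{p_i}(w_i)};
\]
inserting this into the previous display, and then passing from compactly supported $f_i$ to general $f_i\in L^{p_i}(w_i)$ by a routine truncation/Fatou argument, yields Theorem~\ref{thm:multipliers}, with $c_{m,n,T,\vec P}\eqsim c_{m,n,\vec P}\big(\|T\|_{L^r\times\cdots\times L^r\to L^{r/m,\infty}}+K_r\big)$.

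The sparse bound itself I would prove by duality together with a weighted Carleson embedding argument, along the now-standard route for the sharp $A_{\vec P}$ estimate of multilinear sparse operators (Dami\'an--Lerner--P\'erez, Li--Moen--Sun) in the form carrying $L^r$-averages, which is in essence already contained in \cite{DHL} and could also be quoted from there. Concretely: writing $\sigma=v_{\vec w}^{1-p'}$ and $\sigma_i=w_i^{1-(p_i/r)'}$, one pairs $\mathcal A_{\mathcal S,r}(\vec f)$ against a nonnegative $h$ normalized by $\|h\|_{L^{p'}(\sigma)}=1$, substitutes $f_i=\phi_i\sigma_i^{1/r}$, and expands
\[
\int_{\bbR^n}\mathcal A_{\mathcal S,r}(\vec f)\,h\,\d x=\sum_{Q\in\mathcal S}|Q|\,\Big(\prod_{i=1}^m\langle|\phi_i|^r\sigma_i\rangle_Q^{1/r}\Big)\langle h\sigma\rangle_Q;
\]
one then replaces $|Q|$ by $\lesssim|E_Q|$, extracts the power of $[\vec w]_{A_{\vec P/r}}$ produced by the Muckenhoupt structure of the $\sigma_i$ and $\sigma$ (via the usual comparisons between the multilinear $A_{\vec P/r}$ constant and the ordinary $A$-constants of the single weights), and controls what remains by $m+1$ applications of the weighted Carleson embedding theorem --- one in each $L^{p_i/r}(\sigma_i)$ slot and one in the $L^{p'}(\sigma)$ slot --- using the sharp $A_2$-type bound for the linear embedding operator. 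The case distinction $\max\{1,\max_i (p_i/r)'/p\}$ in the exponent simply records which of these $m+1$ slots forces the extremal embedding constant.

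The one genuinely delicate point --- the pointwise domination being already supplied by Theorem~\ref{thm:m2} --- is the exponent bookkeeping: one must track that the $L^r$-averages in $\mathcal A_{\mathcal S,r}$ replace $A_{\vec P}$ by $A_{\vec P/r}$ while the target exponent stays $p$ (not $p/r$), and that the final power of $[\vec w]_{A_{\vec P/r}}$ is precisely $\max\{1,\max_i(p_i/r)'/p\}$ and not a sum of separate contributions. In particular one should avoid the naive reduction ``$\langle|f_i|^r\rangle_Q^{1/r}\le\langle M_r f_i\rangle_Q$, then invoke the $r=1$ theory'', which over-counts the $[\vec w]_{A_{\vec P/r}}$-dependence, and one cannot shortcut through the pointwise inequality $\mathcal A_{\mathcal S,r}(\vec f)\lesssim\mathcal A_{\mathcal S}(|f_1|^r,\cdots,|f_m|^r)^{1/r}$, which fails in the direction one would need. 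The estimate must therefore be run directly for $\mathcal A_{\mathcal S,r}$ as above (or taken verbatim from \cite{DHL}), after which Theorem~\ref{thm:multipliers} follows immediately.
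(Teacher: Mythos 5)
Your proposal is correct and follows the same route as the paper, which in fact does not give a self-contained proof of Theorem~\ref{thm:multipliers} at all: it simply says ``for the proof, we refer the readers to \cite{BCDH}'' and, as an alternative, to the maximal-function trick of \cite{DHL} combined with \cite{LMS}. What you have written out --- pointwise domination by $\mathcal A_{\mathcal S,r}$ via Theorem~\ref{thm:m2}, then the sharp $A_{\vec P/r}$ bound for $\mathcal A_{\mathcal S,r}$ by duality, substitution $f_i=\phi_i\sigma_i^{1/r}$ with $\sigma_i=w_i^{-r/(p_i-r)}$, and $m+1$ Carleson embeddings, noting that compactly supported $f_i\in L^{p_i}(w_i)$ are in $L^r_{\mathrm{loc}}$ because $\vec w\in A_{\vec P/r}$ makes the $\sigma_i$ locally integrable --- is precisely a reconstruction of the \cite{BCDH} argument the paper delegates to, so your sketch is if anything more explicit than the paper's proof. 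Your bookkeeping of the exponent $\max\{1,\max_i(p_i/r)'/p\}$ and the caveat that one must run the argument directly on $\mathcal A_{\mathcal S,r}$ (rather than trying to reduce to the $r=1$ case through a pointwise inequality that goes the wrong way) are both sound.

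One small remark: you characterize the reduction ``$\langle|f_i|^r\rangle_Q^{1/r}\le\langle M_rf_i\rangle_Q$, then invoke the $r=1$ theory'' as a na\"ive over-count, whereas the paper itself advertises a ``maximal function trick'' from \cite{DHL} as a legitimate alternative. These are not the same thing: the trick in \cite{DHL} composes the sparse operator with the multilinear maximal function $\mathcal M_r$ and then uses the sharp $A_{\vec P/r}$ bound for $\mathcal M_r$ from \cite{LMS}, carefully keeping the constants so that no spurious products of $A_{p_i/r}$ constants appear. So your warning correctly flags a wrong shortcut, but the paper's suggested alternative is a different, and still sharp, route rather than the na\"ive one you describe. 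Either way, your main line of argument is right and consistent with the paper's intent.
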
 
Here 
\[
[\vec w]_{A_{\vec P/r}}:= \sup_Q \Big(\dashint_Q \prod_{i=1}^m w_i^{\frac p{p_i}}\Big) \prod_{i=1}^m\Big(\dashint_Q  w_i^{-\frac {r}{p_i-r}}\Big)^{\frac{p(p_i-r)}{p_ir}}.
\] 
 For the proof, we refer the readers to  \cite{BCDH}. One can also follow the maximal function trick used in \cite{DHL} and then utilize the result in \cite{LMS}. We can also obtain the  $A_p$-$A_\infty$ type bounds, see \cite{LS, DHL} for details. As we have discussed in the above, all these estimates apply to the multilinear Fourier multipliers with symbols satisfying \eqref{eq:mh}. Notice that the qualitative result was obtained by the author and Sun in \cite{LS1}.
 
 \section*{Acknowledgement}
The author would like to thank Prof. Tuomas Hyt\"onen for careful reading an earlier version of this paper and valuable suggestions which have improved the quality of this paper. Thanks also go to the anonymous referees for valuable suggestions.

\end{document}